\newtheorem{theorem}{Theorem}[section]
\newtheorem{prop}[theorem]{Proposition}
\newtheorem{cor}[theorem]{Corollary}
\newtheorem{thm}[theorem]{Theorem}
\newtheorem{defn}[theorem]{Definition}
\newtheorem{example}[theorem]{Example}
\newtheorem{rem}[theorem]{Remark}
\numberwithin{equation}{section}
\title[The inverse of the confluent Vandermonde matrices]{New approaches for solving linear confluent Vandermonde systems and inverse of their corresponding matrices via Taylor's expansion}
\author{ Mohammed Mou\c{c}ouf and Said Zriaa}
\date{}
\subjclass[2010]{15A09, 15B99}
\keywords{Confluent Vandermonde matrix, inverse of a matrix, linear Vandermonde systems, polynomials}
\begin{document}
\maketitle
\begin{center}
{\footnotesize Department of Mathematics, Faculty of Science, Chouaib Doukkali University, Morocco\\
Email: moucouf@hotmail.com\\
Email: saidzriaa1992@gmail.com}
\end{center}

\begin{abstract} In this paper, we present a novel method to compute an explicit formula for the inverse of the confluent Vandermonde matrices. Our proposed results may have many interesting perspectives in diverse areas of mathematics and natural sciences, notably on the situations where the  Vandermonde matrices have acquired much usefulness. The method presented here is direct and straightforward, it gives explicit and compact formulas not exist in the large literature on this essential topic. Several examples are presented to highlight the results. Our main tools are some elementary basic linear algebra.
\end{abstract}

\section{Introduction}
The Vandermonde matrices are of considerable importance in diverse areas of mathematics, physics, and engineering. Both the Vandermonde matrices and its inverses have been widely used in many applications include interpolation, discrete Fourier transform, coding theory, hypersurfaces, systems theory, etc. The necessity of such a result is needed and required. It is a well-known fact that the direct calculation of the inverse of such matrices is a very difficult problem numerically and algebraically. Methods to compute the inverse of a Vandermonde matrix are very well studied. D. Kalman shows in \cite{Dk} that Vandermonde matrices have realized serious importance in differential and difference equations due to their normal appearance. The computation of the inverse of the usual Vandermonde matrix is relatively feasible and is provided in the literature but the inverse of the confluent Vandermonde matrix is completely difficult to achieve. Therefore, many theoretical and numerical methods have been developed for the computation of the inverse of the usual Vandermonde matrix given in the literature by many authors for example~\cite{Ali,Meael,Apant,Eara,As2,Swor}. By remarking that there is a solid relation between interpolation polynomials and inversion of the confluent Vandermonde matrices, we can derive an efficient and simple method to inverse this types of matrices. An inversion procedure for such a matrix has been produced in \cite{Magraw} only in a particular case where the multiplicities are equal to $2$. Our general method is entirely different.\\
\indent In the literature, several methods and algorithms have been presented for calculating the inverse of the confluent Vandermonde matrices (see for example \cite{Ho1,Ho2,Jsres,Tang}). However, the most widely used method is the partial fraction expansion it aims to give a recursive algorithm for computing the inverse of the confluent Vandermonde matrices but the result is not presented in a more compact form (see for example \cite{Ho1}).\\
\indent Using the change of basis matrix, we exhibit the general solution to this problem, as an application we derive simply the inverse of another type of Vandermonde matrices. Our results are obtained simply using minimum knowledge. Detailed examples and concluding remarks are presented.\\
\section{Explicit inverse of confluent Vandermonde matrices and applications to solving corresponding linear systems}
 In this section, we present our method to obtain the explicit inverse of the confluent Vandermonde matrices.\\
 Let
\begin{equation}\label{eq: 2.1}
P(x)=(x-\alpha_{1})^{m_{1}}(x-\alpha_{2})^{m_{2}}\cdots(x-\alpha_{s})^{m_{s}}
\end{equation}
be a polynomial of degree $n$ and $\alpha_{1},\alpha_{2},\ldots,\alpha_{s}$ be distinct elements of a field $\mathbb{F}$ of characteristic zero, and $m_{1},m_{2},\ldots,m_{s}$ positive integers. Let
\begin{equation}\label{eq: 2.2}
L_{jk_{j}}(x)[P]=P_{j}(x)(x-\alpha_{j})^{k_{j}}\sum_{i=0}^{m_{j}-1-k_{j}}\frac{1}{i!}g^{(i)}_{j}(\alpha_{j})(x-\alpha_{j})^{i}
\end{equation}
where $1\leq j \leq s;0\leq k_{j} \leq m_{j}-1$ and
\begin{equation}
P_{j}(x)=\prod_{i=1,i\neq j}^{s}(x-\alpha_{i})^{m_{i}}=\frac{P(x)}{(x-\alpha_{j})^{m_{j}}},1\leq j \leq s
\end{equation}
and
\begin{equation}\label{eq: 2.4}
g_{j}(x)=(P_{j}(x))^{-1}
\end{equation}
The polynomials $L_{jk_{j}}(x)[P]$ satisfy $L^{(l)}_{jk_{j}}(\alpha_{i})[P]=l!\delta_{ij}\delta_{lk_{j}}$, here and further $L^{(l)}_{jk_{j}}(x)[P]$ means the $l$th derivative of $L_{jk_{j}}(x)[P]$ (for more details, see \cite{As1}).\\
 The generalization of Hermite's interpolation formula allowed us to write every polynomial in terms of its derivatives at any given point. Exactly in his paper, A. Spitzbart \cite{As1} generalize Hermite's interpolation formula which permits us to write every polynomial $Q$ of degree less than or equal to $n-1$ as
\begin{equation}\label{eq: 2.5}
Q=\sum_{j=1}^{s}(\sum_{k_{j}=0}^{m_{j}-1}\frac{1}{k_{j}!}Q^{(k_{j})}(\alpha_{j})L_{jk_{j}}(x)[P])
\end{equation}
These formulas will be used later to get the researched inverse of the confluent Vandermonde matrix.\\
\indent D. Kalman \cite{Dk} defined the confluent Vandermonde matrix associated with the polynomial $P(x)=(x-\alpha_{1})^{m_{1}}(x-\alpha_{2})^{m_{2}}\cdots(x-\alpha_{s})^{m_{s}}$ of degree $n=m_{1}+m_{2}+\cdots+m_{s}$ by
\begin{equation}
V_{G}(P) = (V_{1} V_{2} \ldots V_{s})
\end{equation}
where $\alpha_{1},\alpha_{2},\ldots,\alpha_{s}$ are distinct elements of $\mathbb{F}$.\\
The block matrix $V_{k}$ is of order $n\times m_{k}, k=1,\ldots,s$ and defined to be the matrix
\begin{equation*}
V_{k}=V_{G}((x-\alpha_{k})^{m_{k}})
\end{equation*}
 with entry
\[  (V_{k})_{ij}= \left\{ \begin{array}{ll}
         \binom{i-1}{j-1}\alpha_{k}^{i-j} & \mbox{if $i \geq j$};\\
        0, & \mbox{otherwise }.\end{array} \right. \]
where $\binom{q}{p}$ is the binomial coefficients given by
\begin{align*}
\binom{q}{p}=\begin{cases}
\frac{q!}{p!(q-p)!} \quad\text{if}\quad p\leq q \\0 \quad\text{otherwise}
\end{cases}
\end{align*}
Throughout this article, $\mathbb{F}$ denotes a field of characteristic zero.\\
\indent In the following, we state a particular case of Vandermonde matrices
\begin{prop}
Let there be given the polynomials $L_{i}(x)=(x-\alpha)^{i},i=0,1,\ldots,n-1$, where $\alpha$ is an arbitrary element of $\mathbb{F}$,
The family of polynomials  $L_{i}(x),0\leq i\leq n-1$ is a basis of $\mathbb{F}_{n-1}[x]$.
The transpose of the change of basis matrix from this basis to the canonical basis is
the corresponding Vandermonde matrix
\begin{equation}
V_{G}((x-\alpha)^{n}) = \begin{pmatrix}
    1 &  0 & 0 &\cdots & 0   \\
   \alpha & 1 & 0 & \cdots &  0 \\
    \alpha^{2} &2\alpha &1 & \cdots & 0 \\
     \alpha^{3} &3\alpha^{2} &3\alpha & \cdots & 0 \\
     \alpha^{4} &4\alpha^{3} &6\alpha^{2} & \cdots & 0 \\
     \alpha^{5} &5\alpha^{4} &10\alpha^{3} & \cdots & 0 \\
    \vdots & \vdots &\vdots & \cdots & \vdots \\
    \alpha^{n-1} & (n-1)\alpha^{n-2} &\frac{(n-1)(n-2)}{2}\alpha^{n-3}   & \cdots & 1
\end{pmatrix}
\end{equation}
Then the corresponding inverse is
\begin{equation*}
 V_{G}^{-1}((x-\alpha)^{n}) = \begin{pmatrix}
    1 &  0 & 0 &\cdots & 0   \\
   -\alpha & 1 & 0 & \cdots &  0 \\
    \alpha^{2} &-2\alpha &1 & \cdots & 0 \\
     -\alpha^{3} &3\alpha^{2} &-3\alpha & \cdots & 0 \\
   \binom{4}{0}(-\alpha)^{4} & \binom{4}{1}(-\alpha)^{3} &\binom{4}{2}(-\alpha)^{2}   & \cdots & 0 \\
    \vdots & \vdots &\vdots & \cdots & \vdots \\
    \binom{n-1}{0}(-\alpha)^{n-1} & \binom{n-1}{1}(-\alpha)^{n-2} & \binom{n-1}{2}(-\alpha)^{n-3}   & \cdots & 1
\end{pmatrix}
\end{equation*}
\end{prop}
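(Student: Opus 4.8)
The plan is to read both matrices as change of basis matrices in the $n$-dimensional space $\mathbb{F}_{n-1}[x]$ and then to obtain the inverse for free from the identity $(A^{T})^{-1}=(A^{-1})^{T}$, so that the whole argument reduces to two applications of the binomial theorem.

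First I would dispose of the basis claim: the polynomials $L_{i}(x)=(x-\alpha)^{i}$, $0\le i\le n-1$, have pairwise distinct degrees $0,1,\ldots,n-1$, hence are linearly independent, and there are $n=\dim_{\mathbb{F}}\mathbb{F}_{n-1}[x]$ of them, so they form a basis $\mathcal{L}$ of $\mathbb{F}_{n-1}[x]$. Next I would identify $V_{G}((x-\alpha)^{n})$ with a change of basis matrix. Expanding each canonical monomial,
\begin{equation*}
x^{i-1}=\bigl((x-\alpha)+\alpha\bigr)^{i-1}=\sum_{j=1}^{i}\binom{i-1}{j-1}\alpha^{i-j}(x-\alpha)^{j-1},
\end{equation*}
the matrix $P$ whose $i$-th column is the coordinate vector of $x^{i-1}$ in the basis $\mathcal{L}$ — that is, the change of basis matrix from $\mathcal{L}$ to the canonical basis — has $(j,i)$-entry $\binom{i-1}{j-1}\alpha^{i-j}$ for $j\le i$ and $0$ otherwise. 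Hence $P^{T}$ has $(i,j)$-entry $\binom{i-1}{j-1}\alpha^{i-j}$ for $i\ge j$, which is precisely $V_{G}((x-\alpha)^{n})$; this settles the first assertion and shows in passing that $V_{G}((x-\alpha)^{n})$ is invertible, being lower unitriangular.

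Then, running the binomial theorem the other way,
\begin{equation*}
(x-\alpha)^{i-1}=\sum_{j=1}^{i}\binom{i-1}{j-1}(-\alpha)^{i-j}x^{j-1},
\end{equation*}
I would read off the matrix $Q$ whose $i$-th column is the coordinate vector of $L_{i-1}$ in the canonical basis, i.e.\ the change of basis matrix from the canonical basis to $\mathcal{L}$, with $(j,i)$-entry $\binom{i-1}{j-1}(-\alpha)^{i-j}$ for $j\le i$. Since $P$ and $Q$ are mutually inverse change of basis matrices, $PQ=QP=I_{n}$, hence $P^{T}Q^{T}=Q^{T}P^{T}=I_{n}$, so that $V_{G}^{-1}((x-\alpha)^{n})=(P^{T})^{-1}=Q^{T}$. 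Finally $Q^{T}$ has $(i,j)$-entry $\binom{i-1}{j-1}(-\alpha)^{i-j}$ for $i\ge j$ and $0$ otherwise, which is exactly the displayed matrix, and the proof is complete.

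I do not expect a real obstacle here: the only delicate point is the bookkeeping of conventions — for each of $P$ and $Q$, keeping straight whether the coordinate vectors sit in the columns or the rows, in which direction the change of basis runs, and where the transpose enters — so that the two binomial expansions are inserted correctly. Should one wish to avoid the change of basis language, an equivalent route is to check $V_{G}((x-\alpha)^{n})\,M=I_{n}$ directly for $M$ the claimed inverse; after cancelling the common factor $\alpha^{i-j}$ this collapses to $\sum_{k=j}^{i}(-1)^{k-j}\binom{i-1}{k-1}\binom{k-1}{j-1}=\delta_{ij}$, which follows from $\binom{i-1}{k-1}\binom{k-1}{j-1}=\binom{i-1}{j-1}\binom{i-j}{k-j}$ together with $\sum_{l=0}^{m}(-1)^{l}\binom{m}{l}=\delta_{m0}$; that short combinatorial manipulation would then be the only mildly technical point.
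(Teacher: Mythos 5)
Your proposal is correct and follows essentially the same route as the paper: the paper's one-line proof rests precisely on the binomial expansion $L_{i}(x)=\sum_{j=0}^{i}\binom{i}{j}(-\alpha)^{i-j}x^{j}$, which is the second of your two expansions, and you have simply spelled out the change-of-basis and transpose bookkeeping (plus the reverse expansion $x^{i-1}=((x-\alpha)+\alpha)^{i-1}$) that the paper leaves implicit. Nothing further is needed.
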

\begin{proof}
This result can be shown easily using the fact that
\begin{equation}
L_{i}(x)=\sum_{j=0}^{i}\binom{i}{j}(-\alpha)^{i-j}x^{j}
\end{equation}
for $i=0,1,\ldots,n-1$.
\end{proof}
 Now, we state a fundamental result which is to be proved
\begin{thm}
Let $P(x)=(x-\alpha_{1})^{m_{1}}(x-\alpha_{2})^{m_{2}}\cdots(x-\alpha_{s})^{m_{s}}$, where $\alpha_{1},\alpha_{2},\ldots,\alpha_{s}$ be differents elements of $\mathbb{F}$ and $m_{1},m_{2},\ldots,m_{s}$ be positive integers, such that $m_{1}+m_{2}+\cdots+m_{s}=n$. The family of polynomials
\begin{align}
B'=&\lbrace L_{10}(x)[P],L_{11}(x)[P],\ldots,L_{1m_{1}-1}(x)[P],\ldots,L_{s0}(x)[P],\nonumber
\\&L_{s1}(x)[P],\ldots,L_{sm_{s}-1}(x)[P] \rbrace
\end{align}
is a basis of $\mathbb{F}_{n-1}[x]$.
\end{thm}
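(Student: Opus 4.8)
The plan is to lean on the two facts recalled immediately before the statement: that each $L_{jk_{j}}(x)[P]$ belongs to $\mathbb{F}_{n-1}[x]$, and the biorthogonality-type relation $L^{(l)}_{jk_{j}}(\alpha_{i})[P]=l!\,\delta_{ij}\delta_{lk_{j}}$. A degree count in \eqref{eq: 2.2} gives $\deg L_{jk_{j}}(x)[P]=(n-m_{j})+k_{j}+(m_{j}-1-k_{j})=n-1$, so indeed $B'\subset \mathbb{F}_{n-1}[x]$, and the number of polynomials listed in $B'$ is $m_{1}+m_{2}+\cdots+m_{s}=n=\dim_{\mathbb{F}}\mathbb{F}_{n-1}[x]$. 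Hence it suffices to prove that $B'$ is linearly independent (or, equivalently here, that it spans).

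First I would establish linear independence directly. Suppose $\sum_{j=1}^{s}\sum_{k_{j}=0}^{m_{j}-1}c_{jk_{j}}L_{jk_{j}}(x)[P]=0$ for scalars $c_{jk_{j}}\in\mathbb{F}$. Fix $i$ with $1\leq i\leq s$ and an integer $l$ with $0\leq l\leq m_{i}-1$; differentiating the relation $l$ times and evaluating at $\alpha_{i}$, the identity $L^{(l)}_{jk_{j}}(\alpha_{i})[P]=l!\,\delta_{ij}\delta_{lk_{j}}$ collapses the double sum to the single term $c_{il}\,l!$, so $c_{il}=0$. Since $i$ and $l$ were arbitrary, every coefficient vanishes, and $B'$ is a linearly independent family of cardinality $n$ in the $n$-dimensional space $\mathbb{F}_{n-1}[x]$; therefore it is a basis. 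Alternatively, one may invoke Spitzbart's formula \eqref{eq: 2.5}, which already exhibits every $Q\in\mathbb{F}_{n-1}[x]$ as an explicit $\mathbb{F}$-linear combination of the elements of $B'$, so $B'$ spans $\mathbb{F}_{n-1}[x]$ and the same cardinality count finishes the argument. I would present the linear-independence version as the main proof, since it uses only the stated derivative identity, and remark that \eqref{eq: 2.5} moreover records the coordinates of an arbitrary polynomial in this basis, which is exactly the ingredient needed afterwards to read off the inverse of the confluent Vandermonde matrix.

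There is no genuine obstacle here: the content is essentially borrowed from \cite{As1}, and once one checks the routine bookkeeping that $\deg L_{jk_{j}}(x)[P]=n-1$ and $\#B'=n$, the delta-relation does all the work. The only point requiring a line of care is the degree count, to be sure that $B'$ actually lies inside $\mathbb{F}_{n-1}[x]$ and that its size matches the dimension; after that the conclusion is immediate.
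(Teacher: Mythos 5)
Your proof is correct and takes essentially the same route as the paper's: membership in $\mathbb{F}_{n-1}[x]$ plus a cardinality count, then linear independence by differentiating the dependence relation $l$ times and evaluating at $\alpha_{i}$ so that $L^{(l)}_{jk_{j}}(\alpha_{i})[P]=l!\,\delta_{ij}\delta_{lk_{j}}$ isolates each coefficient; your write-up is in fact more explicit than the paper's ``little manipulation.'' One cosmetic caveat: the degree of $L_{jk_{j}}(x)[P]$ need not equal $n-1$ exactly (it drops when $g_{j}^{(m_{j}-1-k_{j})}(\alpha_{j})=0$), but only the containment $\deg\leq n-1$ is needed, so the argument stands.
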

\begin{proof}
Since $L_{jk_{j}}(x)[P]$ is of degree less than or equal to $n-1$ for all $1\leq j \leq s;0\leq k_{j} \leq m_{j}-1$, it suffices to verify that these polynomials are linearly independent. Let $u_{jk_{j}}\in \mathbb{F}$ such that
$$\sum_{i=0}^{m_{1}-1}u_{1i}L_{1i}(x)+\sum_{i=0}^{m_{2}-1}u_{2i}L_{2i}(x)+\cdots+\sum_{i=0}^{m_{s}-1}u_{si}L_{si}(x)=0$$
We differentiate this identity $l$ times, for $l\leq max(m_{1}-1,m_{2}-1,\ldots,m_{s}-1)$, and let $x=\alpha_{r}$, we obtain
 $$\sum_{i=0}^{m_{1}-1}u_{1i}l!\delta_{r1}\delta_{li}+\sum_{i=0}^{m_{2}-1}u_{2i}l!\delta_{r2}\delta_{li}+\cdots+\sum_{i=0}^{m_{s}-1}u_{si}l!\delta_{rs}\delta_{li}=0$$
 and then
 $$\sum_{i=0}^{m_{1}-1}u_{1i}\delta_{r1}\delta_{li}+\sum_{i=0}^{m_{2}-1}u_{2i}\delta_{r2}\delta_{li}+\cdots+\sum_{i=0}^{m_{s}-1}u_{si}\delta_{rs}\delta_{li}=0$$
 Thus, by a little manipulation of the last equality, the proof is completed.
\end{proof}
\begin{rem}~
\begin{itemize}
\item The coordinates in this basis of a polynomial $Q$ of degree less than or equal to $n-1$ are given explicitly by \eqref{eq: 2.5}.
\item The partial fractions decomposition is a powerful tool in many situations for example in the determination of the Jordan-Chevalley decomposition. Here we give a direct and simple method for obtaining the explicit formula for the partial fractions. Consider the polynomial $R(x)=(x-a_{1})^{m_{1}}(x-a_{2})^{m_{2}}\cdots(x-a_{s})^{m_{s}}$ of degree $n$ and $a_{1},a_{2},\ldots,a_{s}$ be distinct elements of $\mathbb{F}$. We have
\begin{equation*}
\sum_{j=1}^{s}L_{j0}(x)[R]=1
\end{equation*}
Consequently, we obtain
\begin{equation*}
\frac{1}{R(x)}=\sum_{j=1}^{s}\sum_{k_{j}=0}^{m_{j}-1}\frac{g^{(k_{j})}_{j}(a_{j})}{k_{j}!(x-a_{j})^{m_{j}-k_{j}}}
\end{equation*}
where $g_{j}(x)$ is defined by \eqref{eq: 2.4}.
\end{itemize}
\end{rem}
Let us consider the following linear confluent Vandermonde system
\begin{equation}\label{eq: 2.11}
\sum_{j=1}^{s}\sum_{k_{j}=0}^{m_{j}-1}x_{jk_{j}}\binom{r}{k_{j}}\alpha_{j}^{r-j}=u_{r}, 0\leq r\leq n-1
\end{equation}
The determination of the $x_{jk_{j}}$ is still related to the computation of the inverse of the confluent Vandermonde matrix. This topic was the subject of several papers. Therefore, various numerical and theoretical procedures have been developed for such as inversion. Recently, this issue has gained a regain of interest due to its appearance in applied mathematics, engineering computation, and many applications. In this section, we present a novel method to invert the confluent Vandermonde matrix and solve the system \eqref{eq: 2.11}. Our method is merely based on the interpolation polynomials. Consequently, an explicit expression for the inverse is obtained directly.\\
\indent Our main result is the following theorem which gives an explicit closed-form of the confluent Vandermonde inverse matrix.\\
\begin{thm}\label{Theorm 2.4}
Using the same notation above, the explicit inverse of the confluent Vandermonde matrix $V_{G}(P)$, where
\begin{equation*}
P(x)=(x-\alpha_{1})^{m_{1}}(x-\alpha_{2})^{m_{2}}\cdots(x-\alpha_{s})^{m_{s}}
\end{equation*}
has the form
\begin{equation}
V_{G}^{-1}(P) = \begin{pmatrix}
   \mathcal{L}_{1{m_{1}}} \\
   \mathcal{L}_{2{m_{2}}}  \\
    \vdots  \\
    \mathcal{L}_{sm_{s}}  \\

\end{pmatrix}
\end{equation}
for $r=1,2,\ldots,s$ the block matrix $\mathcal{L}_{rm_{r}}$ is of order $m_{r}\times n$ and given by
\begin{equation}
\mathcal{L}_{rm_{r}}=(\frac{1}{(j-1)!}L^{(j-1)}_{r(i-1)}(0)[P])_{1\leq i\leq m_{r},1\leq j\leq n}
\end{equation}
More precisely
\begin{equation*}
\mathcal{L}_{rm_{r}}= \begin{pmatrix}

  L_{r0}(0)[P] & L^{(1)}_{r0}(0)[P] & \cdots & \frac{1}{(n-1)!}L^{(n-1)}_{r0}(0)[P]  \\
  L_{r1}(0)[P] & L^{(1)}_{r1}(0)[P] & \cdots & \frac{1}{(n-1)!}L^{(n-1)}_{r1}(0)[P]  \\
    \vdots & \vdots & \cdots & \vdots \\
    L_{rm_{r}-1}(0)[P] & L^{(1)}_{rm_{r}-1}(0)[P] & \cdots & \frac{1}{(n-1)!}L^{(n-1)}_{rm_{r}-1}(0)[P]  \\
\end{pmatrix}
\end{equation*}
\end{thm}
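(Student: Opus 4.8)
The plan is to recognize $V_{G}(P)$ as the transpose of a change-of-basis matrix and then invert it by simply interchanging the two bases; the explicit formula then falls out of Spitzbart's expansion \eqref{eq: 2.5}. First I would fix the ordering of the Hermite basis $B'$ so that it matches the ordering of the columns of $V_{G}(P)=(V_{1}\,V_{2}\,\cdots\,V_{s})$, and compute the coordinates of each monomial $x^{r}$, $0\le r\le n-1$, in this basis. Since $x^{r}\in\mathbb{F}_{n-1}[x]$, formula \eqref{eq: 2.5} applies, and using $\frac{1}{k_{j}!}(x^{r})^{(k_{j})}(\alpha_{j})=\binom{r}{k_{j}}\alpha_{j}^{\,r-k_{j}}$ one sees that the coordinate vector of $x^{r}$ in $B'$ is exactly the $r$-th row of $V_{G}(P)$. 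Hence $V_{G}(P)^{T}$ is the matrix sending coordinates in the canonical basis $\{1,x,\dots,x^{n-1}\}$ to coordinates in $B'$; in particular $V_{G}(P)$ is nonsingular (which also re-derives, via the theorem above asserting that $B'$ is a basis of $\mathbb{F}_{n-1}[x]$, the invertibility needed to speak of $V_{G}^{-1}(P)$).

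Next I would invert. As $V_{G}(P)^{T}$ converts canonical coordinates into $B'$-coordinates, its inverse $\bigl(V_{G}(P)^{T}\bigr)^{-1}=\bigl(V_{G}^{-1}(P)\bigr)^{T}$ is the change-of-basis matrix from $B'$ to the canonical basis; thus its column indexed by the pair $(j,k_{j})$ is the coordinate vector of $L_{jk_{j}}(x)[P]$ in $\{1,x,\dots,x^{n-1}\}$, that is, the list of Taylor coefficients $\bigl(\frac{1}{t!}L^{(t)}_{jk_{j}}(0)[P]\bigr)_{0\le t\le n-1}$. Transposing once more, the row of $V_{G}^{-1}(P)$ attached to $(j,k_{j})$ has $t$-th entry $\frac{1}{t!}L^{(t)}_{jk_{j}}(0)[P]$; collecting the rows with fixed $j=r$ into one block and switching to the $1$-based indices of the statement ($k_{r}=i-1$, $t=j-1$) gives precisely the block $\mathcal{L}_{rm_{r}}$, and stacking these blocks in the order $r=1,\dots,s$ gives the asserted form of $V_{G}^{-1}(P)$.

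The only real difficulty is bookkeeping: one must keep the ordering of $B'$ aligned with the column blocks of $V_{G}(P)$, remember that $V_{G}(P)$ is the transpose (not the matrix itself) of the change of basis, and translate between the $0$-based indexing natural for Taylor expansions and the $1$-based indexing of the theorem. If one prefers an argument avoiding change-of-basis language altogether, the same identities give a direct verification that $V_{G}(P)\,V_{G}^{-1}(P)=I_{n}$: the $(r,t)$ entry of the product is $\sum_{j}\sum_{k_{j}}\binom{r}{k_{j}}\alpha_{j}^{\,r-k_{j}}\,\frac{1}{t!}L^{(t)}_{jk_{j}}(0)[P]$, which by \eqref{eq: 2.5} applied to $Q=x^{r}$ is the $t$-th Taylor coefficient of $x^{r}$ at $0$, namely $\delta_{rt}$.
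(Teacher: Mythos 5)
Your proposal is correct and takes essentially the same route as the paper: both hinge on applying Spitzbart's formula \eqref{eq: 2.5} to the monomials $x^{r}$, $0\le r\le n-1$, and on the Taylor expansion of the $L_{jk_{j}}(x)[P]$ at $0$, with your change-of-basis phrasing (and your closing direct check that $V_{G}(P)\,\mathcal{L}=I_{n}$) being just a repackaging of the paper's matrix identity $V_{G}(P)\bigl(L_{jk_{j}}(x)[P]\bigr)_{j,k_{j}}=(1,x,\ldots,x^{n-1})^{T}$. No gaps.
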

\begin{proof}
 Let us write, every $Q\in \mathbb{F}_{n-1}[x]$ as
\begin{equation}
Q=\sum_{j=1}^{s}\sum_{k_{j}=0}^{m_{j}-1}\frac{1}{k_{j}!}Q^{(k_{j})}(\alpha_{j})L_{jk_{j}}(x)[P]
\end{equation}
In particular, for $Q=x^{r}, r=0,1,\ldots,n-1$, we have
\begin{equation}\label{eq: 2.15}
x^{r}=\sum_{j=1}^{s}\sum_{k_{j}=0}^{m_{j}-1}\frac{1}{k_{j}!}(x^{r})^{(k_{j})}(\alpha_{j})L_{jk_{j}}(x)[P]
\end{equation}
Thus, formula \eqref{eq: 2.15} can be representated in matrix form as
\begin{equation*}
\begin{pmatrix}
    V_{G}((x-\alpha_{1})^{m_{1}})&\cdots&V_{G}((x-\alpha_{s})^{m_{s}})  \\
\end{pmatrix}
\begin{pmatrix}
L_{10}(x)[P]\\
L_{11}(x)[P]\\
\vdots\\
L_{sm_{s}-1}(x)[P]
\end{pmatrix}
=
\begin{pmatrix}
1\\
x\\
\vdots\\
x^{n-1}
\end{pmatrix}
\end{equation*}
Using Taylor's Theorem, we get
\begin{equation}
L_{jk_{j}}(x)[P]=\sum_{i=0}^{n-1}\frac{L^{(i)}_{jk_{j}}(0)[P]}{i!}x^{i},
\end{equation}
for all $1\leq j \leq s$ and $0\leq k_{j} \leq m_{j}-1$.
From which we deduce
\begin{equation}
\begin{pmatrix}
L_{10}(x)[P]\\
L_{11}(x)[P]\\
\vdots\\
L_{sm_{s}-1}(x)[P]
\end{pmatrix}
=
\begin{pmatrix}
   \mathcal{L}_{1{m_{1}}} \\
   \mathcal{L}_{2{m_{2}}}  \\
    \vdots  \\
    \mathcal{L}_{sm_{s}}  \\

\end{pmatrix}
\begin{pmatrix}
1\\
x\\
\vdots\\
x^{n-1}
\end{pmatrix}
\end{equation}
Finally, we get the desired conclusion.
\end{proof}
As a consequence, we obtain
\begin{cor}
The solutions $x_{jk_{j}},j=1,2,\ldots,s$ of the linear confluent Vandermonde system \eqref{eq: 2.11} are given by
\begin{equation}
x_{jk_{j}}=\sum_{i=0}^{n-1}\frac{u_{i}}{i!}L^{(i)}_{jk_{j}}(0)[P]
\end{equation}
where the $L_{jk_{j}}(x)[P],1\leq j \leq k; 0\leq k_{j} \leq m_{j}-1,$ are described in \eqref{eq: 2.2}.
The solution $x_{jk_{j}}$ can be represented in matrix expression as follows
\begin{equation}
x_{jk_{j}}=\begin{pmatrix}
L_{jk_{j}}(0)[P] & \cdots & \frac{1}{(n-1)!}L^{(n-1)}_{jk_{j}}(0)[P]
\end{pmatrix}
\begin{pmatrix}
u_{0} \\
u_{1} \\
\vdots \\
u_{n-1}
\end{pmatrix}
\end{equation}
\end{cor}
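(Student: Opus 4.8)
The plan is to deduce the corollary directly from Theorem~\ref{Theorm 2.4} by recognizing the linear system \eqref{eq: 2.11} as a matrix equation governed by the confluent Vandermonde matrix.

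First, I would rewrite \eqref{eq: 2.11} in matrix form. Collect the unknowns blockwise as $\mathbf{x}=(x_{10},x_{11},\ldots,x_{1m_{1}-1},\ldots,x_{s0},\ldots,x_{sm_{s}-1})^{T}$ and put $\mathbf{u}=(u_{0},u_{1},\ldots,u_{n-1})^{T}$. Comparing the coefficient $\binom{r}{k_{j}}\alpha_{j}^{r-k_{j}}$ occurring in \eqref{eq: 2.11} with the $(r+1,k_{j}+1)$ entry of the block $V_{j}$ in $V_{G}(P)=(V_{1}\,V_{2}\,\cdots\,V_{s})$, one sees at once that \eqref{eq: 2.11} is exactly the system $V_{G}(P)\mathbf{x}=\mathbf{u}$; here the equation index $r$ of \eqref{eq: 2.11} matches the row $i=r+1$ of $V_{G}(P)$, and the position $k_{j}$ inside the $j$th block matches the column $k_{j}+1$ of $V_{j}$.

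Next, since $V_{G}(P)$ is invertible --- its rows are, by the computation in the proof of Theorem~\ref{Theorm 2.4}, the coordinate vectors of the monomials $1,x,\ldots,x^{n-1}$ with respect to the basis $B'$, so $V_{G}(P)$ is (the transpose of) a change-of-basis matrix and hence nonsingular --- the system has the unique solution $\mathbf{x}=V_{G}^{-1}(P)\mathbf{u}$. Substituting the explicit form of $V_{G}^{-1}(P)$ supplied by Theorem~\ref{Theorm 2.4}, the row of $V_{G}^{-1}(P)$ indexed by the pair $(j,k_{j})$ (that is, the $(m_{1}+\cdots+m_{j-1}+k_{j}+1)$th row, sitting in the block $\mathcal{L}_{jm_{j}}$) equals $\bigl(L_{jk_{j}}(0)[P],\,L^{(1)}_{jk_{j}}(0)[P],\,\ldots,\,\frac{1}{(n-1)!}L^{(n-1)}_{jk_{j}}(0)[P]\bigr)$. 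Pairing this row with $\mathbf{u}$ yields $x_{jk_{j}}=\sum_{i=0}^{n-1}\frac{u_{i}}{i!}L^{(i)}_{jk_{j}}(0)[P]$, which is the claimed scalar formula, and writing the same pairing as a row-times-column product gives the claimed matrix expression.

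There is essentially no analytic difficulty: the corollary is an immediate consequence of Theorem~\ref{Theorm 2.4}. The only points that need a little care are bookkeeping ones --- checking that the index convention which turns \eqref{eq: 2.11} into $V_{G}(P)\mathbf{x}=\mathbf{u}$ is consistent (in particular that the exponent in \eqref{eq: 2.11} should read $\alpha_{j}^{r-k_{j}}$), and locating correctly the row of $V_{G}^{-1}(P)$ attached to the index pair $(j,k_{j})$ inside the block decomposition $V_{G}^{-1}(P)=(\mathcal{L}_{1m_{1}}^{T}\,\cdots\,\mathcal{L}_{sm_{s}}^{T})^{T}$. Once these are fixed, the statement follows by reading off a single component of the product $V_{G}^{-1}(P)\mathbf{u}$.
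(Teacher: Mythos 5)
Your proposal is correct and follows exactly the route the paper intends: the corollary is stated there as an immediate consequence of Theorem~\ref{Theorm 2.4}, obtained by viewing \eqref{eq: 2.11} as $V_{G}(P)\mathbf{x}=\mathbf{u}$ and reading off the row of $V_{G}^{-1}(P)$ indexed by $(j,k_{j})$. Your bookkeeping remarks (including that the exponent in \eqref{eq: 2.11} should read $\alpha_{j}^{r-k_{j}}$, a typo in the paper) are accurate and complete the argument.
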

\indent An interesting particular case is the following
\begin{cor}
Let $\alpha_{1},\alpha_{2}$ be two distinct elements of $\mathbb{F}$ and $m_{1},m_{2}$ be two positive integers. Let $V_{G}$ be the following confluent Vandermonde matrix
\begin{equation*}
V_{G} = \begin{pmatrix}
    1 &  0 &\cdots & 0 & 1 & 0 &\cdots & 0  \\
   \alpha_{1} & 1 & \cdots &  0 & \alpha_{2} & 1 & \cdots & 0 \\
    \alpha_{1}^{2} & 2\alpha_{1} & \cdots & 0& \alpha_{2}^{2} & 2\alpha_{2} & \cdots & 0  \\
     \alpha_{1}^{3} &3\alpha_{1}^{2} & \cdots & 0 & \alpha_{2}^{3} & 3\alpha_{2}^{2} & \cdots & 0   \\
     \alpha_{1}^{4} &4\alpha_{1}^{3} & \cdots & 0 & \alpha_{2}^{4} & 4\alpha_{2}^{3} & \cdots & 0  \\
     \alpha_{1}^{5} &5\alpha_{1}^{4} & \cdots & 0 & \alpha_{2}^{5} & 5\alpha_{2}^{4} & \cdots & 0  \\
    \vdots & \vdots & \cdots & \vdots & \vdots & \vdots & \cdots & \vdots \\
    \alpha_{1}^{n-1} & (n-1)\alpha_{1}^{n-2}   & \cdots & 1& \alpha_{2}^{n-1} & (n-1)\alpha_{2}^{n-2} &\cdots&1
\end{pmatrix}
\end{equation*}
where $m_{1}+m_{2}=n$. Then the inverse of $V_{G}$ is
\begin{equation*}
V_{G}^{-1}= \begin{pmatrix}
  a_{11} & a_{12} & a_{13} &\cdots & a_{1n}  \\
  a_{21} & a_{22} & a_{23} &\cdots & a_{2n}  \\
    \vdots & \vdots & \vdots & \cdots & \vdots \\
    a_{m_{1}1} & a_{m_{1}2} & a_{m_{1}3} & \cdots & a_{m_{1}n}  \\
   b_{11} & b_{12} & b_{13} & \cdots & b_{1n}  \\
  b_{21} & b_{22} & b_{23} & \cdots & b_{2n}  \\
    \vdots & \vdots & \vdots & \cdots & \vdots \\
    b_{m_{2}1} & b_{m_{2}2} & b_{m_{2}3} & \cdots & b_{m_{2}n}  \\
\end{pmatrix}
\end{equation*}
where
\begin{equation*}
a_{ij}=\sum_{p=0}^{m_{1}-i}\frac{(-1)^{m_{2}+i+j}\binom{m_{2}+p-1}{m_{2}-1}}{(\alpha_{1}-\alpha_{2})^{m_{2}+p}}\sum_{r=0}^{j-1}\binom{i+p-1}{r}\binom{m_{2}}{j-1-r}\alpha_{1}^{p+i-1-r}\alpha_{2}^{m_{2}-j+1+r}
\end{equation*}
and
\begin{equation*}
b_{ij}=\sum_{p=0}^{m_{2}-i}\frac{(-1)^{m_{1}+i+j}\binom{m_{1}+p-1}{m_{1}-1}}{(\alpha_{2}-\alpha_{1})^{m_{1}+p}}\sum_{r=0}^{j-1}\binom{m_{1}}{j-1-r}\binom{i+p-1}{r}\alpha_{1}^{m_{1}-j+1+r}\alpha_{2}^{p+i-1-r}
\end{equation*}
for $1\leq j\leq n$ and $1\leq i\leq m_{p}$ with $p=1,2$.
\end{cor}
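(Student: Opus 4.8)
The plan is to specialize Theorem~\ref{Theorm 2.4} to $s=2$ and then evaluate the entries $\frac{1}{(j-1)!}L^{(j-1)}_{r(i-1)}(0)[P]$ in closed form by a single binomial expansion. Write $P(x)=(x-\alpha_1)^{m_1}(x-\alpha_2)^{m_2}$. By Theorem~\ref{Theorm 2.4}, $V_G^{-1}$ is the vertical stack of $\mathcal{L}_{1m_1}$ and $\mathcal{L}_{2m_2}$, so I only need to check that $a_{ij}=\frac{1}{(j-1)!}L^{(j-1)}_{1(i-1)}(0)[P]$ for $1\le i\le m_1$, $1\le j\le n$, and $b_{ij}=\frac{1}{(j-1)!}L^{(j-1)}_{2(i-1)}(0)[P]$ for $1\le i\le m_2$, $1\le j\le n$. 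Each $L_{jk_j}(x)[P]$ has degree at most $n-1$ (it is the product of $P_j$, of $(x-\alpha_j)^{k_j}$, and of a polynomial of degree $m_j-1-k_j$, whose degrees add up to $n-1$), so by Taylor's theorem at $0$ the scalar $\frac{1}{(j-1)!}L^{(j-1)}_{r(i-1)}(0)[P]$ is simply the coefficient of $x^{j-1}$ in $L_{r(i-1)}(x)[P]$. The whole argument therefore comes down to reading off one coefficient of an explicitly given polynomial.

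For the first block, specialize \eqref{eq: 2.2}: when $s=2$ we have $P_1(x)=(x-\alpha_2)^{m_2}$, hence $g_1(x)=(x-\alpha_2)^{-m_2}$, and differentiating $p$ times gives $\frac{1}{p!}g_1^{(p)}(\alpha_1)=(-1)^p\binom{m_2+p-1}{m_2-1}(\alpha_1-\alpha_2)^{-m_2-p}$. Putting $k_1=i-1$ into \eqref{eq: 2.2} then yields $L_{1(i-1)}(x)[P]=\sum_{p=0}^{m_1-i}(-1)^p\binom{m_2+p-1}{m_2-1}(\alpha_1-\alpha_2)^{-m_2-p}(x-\alpha_2)^{m_2}(x-\alpha_1)^{i-1+p}$, where the upper limit $m_1-i$ is exactly $m_1-1-k_1$.

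Next I extract the coefficient of $x^{j-1}$. Expanding $(x-\alpha_2)^{m_2}$ and $(x-\alpha_1)^{i-1+p}$ by the binomial theorem and taking their Cauchy product, the coefficient of $x^{j-1}$ in $(x-\alpha_2)^{m_2}(x-\alpha_1)^{i-1+p}$ equals $\sum_{r=0}^{j-1}\binom{i+p-1}{r}\binom{m_2}{j-1-r}(-\alpha_1)^{i+p-1-r}(-\alpha_2)^{m_2-j+1+r}$, the convention $\binom{q}{p}=0$ for $p>q$ allowing the fixed range $0\le r\le j-1$. Since the total power of $-1$ contributed by $(-1)^p$, $(-\alpha_1)^{i+p-1-r}$ and $(-\alpha_2)^{m_2-j+1+r}$ is $2p+i+m_2-j$, which is congruent mod $2$ to $m_2+i+j$ regardless of $p$ and $r$, the sign $(-1)^{m_2+i+j}$ factors out of both sums; collecting everything gives precisely the stated formula for $a_{ij}$.

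Finally, $b_{ij}$ is obtained by running the identical computation with the indices $1$ and $2$ interchanged: here $P_2(x)=(x-\alpha_1)^{m_1}$, $g_2(x)=(x-\alpha_1)^{-m_1}$, and the two steps above reproduce the displayed expression for $b_{ij}$, the binomials and the powers of $\alpha_1,\alpha_2$ merely appearing in transposed order. The only delicate point in the whole proof is the index and sign bookkeeping in the Cauchy product — matching the ranges $0\le p\le m_1-i$ and $0\le r\le j-1$ and checking that the global sign $(-1)^{m_2+i+j}$ is independent of the summation variables — and I do not anticipate any obstacle beyond this routine algebra.
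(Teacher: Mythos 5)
Your proposal is correct and follows essentially the same route as the paper: you specialize Theorem~\ref{Theorm 2.4} to $s=2$, derive the same explicit expansions of $L_{1k_{1}}(x)[P]$ and $L_{2k_{2}}(x)[P]$ from the derivatives of $g_{1}$ and $g_{2}$, and then read off the $(j-1)$-st Taylor coefficient at $0$. The only cosmetic difference is that you extract this coefficient by a binomial/Cauchy-product expansion, while the paper invokes Leibniz's rule for the derivative of the product at $0$ --- the two computations are identical term by term.
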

\begin{proof}
Let $P(x)=(x-\alpha_{1})^{m_{1}}(x-\alpha_{2})^{m_{2}}$. Then we can see easily that for all $0\leq k_{1}\leq m_{1}-1$ and $0\leq k_{2}\leq m_{2}-1$, we have
$$ \left \{
\begin{array}{rcl}
L_{1k_{1}}(x)[P]&=&\sum_{i=0}^{m_{1}-k_{1}-1}\frac{(-1)^{i}\binom{m_{2}+i-1}{m_{2}-1}}{(\alpha_{1}-\alpha_{2})^{m_{2}+i}}(x-\alpha_{1})^{i+k_{1}}(x-\alpha_{2})^{m_{2}}\\
L_{2k_{2}}(x)[P]&=&\sum_{i=0}^{m_{2}-k_{2}-1}\frac{(-1)^{i}\binom{m_{1}+i-1}{m_{1}-1}}{(\alpha_{2}-\alpha_{1})^{m_{1}+i}}(x-\alpha_{1})^{m_{1}}(x-\alpha_{2})^{i+k_{2}}
\end{array}
\right.
$$
As a consequence of Theorem~\ref{Theorm 2.4}, we get the result using only Leibniz's rule.
\end{proof}
With the following example, we illustrate the use of Theorem~\ref{Theorm 2.4}
\begin{example}\label{example 1}
Let $\alpha_{1},\alpha_{2}$ distinct elements of $\mathbb{F}$. We wish to inverse the matrix
$$ K = \begin{pmatrix}
    1 &  0 & 0 & 1   \\
   \alpha_{1} & 1 & 0 & \alpha_{2} \\
    \alpha^{2}_{1} &2\alpha_{1} &1 & \alpha_{2}^{2} \\
     \alpha^{3}_{1} &3\alpha^{2}_{1} &3\alpha_{1} & \alpha_{2}^{3} \\
\end{pmatrix}
$$
In this case, we consider
$$P_{1}(x)=(x-\alpha_{2}),g_{1}(x)=(P_{1}(x))^{-1} $$
$$P_{2}(x)=(x-\alpha_{1})^{3},g_{2}(x)=(P_{2}(x))^{-1}$$
On the other hand
$$ L_{1k_{1}}(x)[P]=P_{1}(x)(x-\alpha_{1})^{k_{1}}\sum_{i=0}^{2-k_{1}}\frac{1}{i!} g^{(i)}_{1}(\alpha_{1})(x-\alpha_{1})^{i},0\leq k_{1}\leq 2$$
and
$$ L_{20}(x)[P]=P_{2}(x)g_{2}(\alpha_{2})$$
Make on the polynomials $L_{jk_{j}}(x)[P]$ elementary operations and expressing them in the canonical basis of $\mathbb{F}_{3}[x]$, we then obtain
$$ \left \{
\begin{array}{rcl}
L_{10}(x)[P]=&\frac{-3\alpha_{1}^{2}\alpha_{2}+3\alpha_{1}\alpha_{2}^{2}-\alpha_{2}^{3}}{(\alpha_{1}-\alpha_{2})^{3}}+\frac{3\alpha_{1}^{2}}{(\alpha_{1}-\alpha_{2})^{3}}x+\frac{-3\alpha_{1}}{(\alpha_{1}-\alpha_{2})^{3}}x^{2}+\frac{1}{(\alpha_{1}-\alpha_{2})^{3}}x^{3}\\
L_{11}(x)[P]=&\frac{2\alpha_{1}^{2}\alpha_{2}-\alpha_{1}\alpha_{2}^{2}}{(\alpha_{1}-\alpha_{2})^{2}}+\frac{\alpha_{2}^{2}-2\alpha_{1}^{2}-2\alpha_{1}\alpha_{2}}{(\alpha_{1}-\alpha_{2})^{2}}x+\frac{3\alpha_{1}}{(\alpha_{1}-\alpha_{2})^{2}}x^{2}+\frac{-1}{(\alpha_{1}-\alpha_{2})^{2}}x^{3}\\
L_{12}(x)[P]=&\frac{-\alpha_{1}^{2}\alpha_{2}}{(\alpha_{1}-\alpha_{2})}+\frac{\alpha_{1}^{2}+2\alpha_{1}\alpha_{2}}{(\alpha_{1}-\alpha_{2})}x-\frac{2\alpha_{1}+\alpha_{2}}{(\alpha_{1}-\alpha_{2})}x^{2}+\frac{1}{(\alpha_{1}-\alpha_{2})}x^{3}\\
L_{20}(x)[P]=&\frac{-\alpha^{3}_{1}}{(\alpha_{2}-\alpha_{1})^{3}}+\frac{3\alpha_{1}^{2}}{(\alpha_{2}-\alpha_{1})^{3}}x-\frac{3\alpha_{1}}{(\alpha_{2}-\alpha_{1})^{3}}x^{2}+\frac{1}{(\alpha_{2}-\alpha_{1})^{3}}x^{3}
\end{array}
\right.
$$
Using Theorem~\ref{Theorm 2.4}, we get
$$ K^{-1} = \begin{pmatrix}

  \frac{-3\alpha_{1}^{2}\alpha_{2}+3\alpha_{1}\alpha_{2}^{2}-\alpha_{2}^{3}}{(\alpha_{1}-\alpha_{2})^{3}}&\frac{3\alpha_{1}^{2}}{(\alpha_{1}-\alpha_{2})^{3}}&\frac{-3\alpha_{1}}{(\alpha_{1}-\alpha_{2})^{3}} &\frac{1}{(\alpha_{1}-\alpha_{2})^{3}} \\
    \frac{2\alpha_{1}^{2}\alpha_{2}-\alpha_{1}\alpha_{2}^{2}}{(\alpha_{1}-\alpha_{2})^{2}} &\frac{\alpha_{2}^{2}-2\alpha_{1}^{2}-2\alpha_{1}\alpha_{2}}{(\alpha_{1}-\alpha_{2})^{2}} &\frac{3\alpha_{1}}{(\alpha_{1}-\alpha_{2})^{2}} & \frac{-1}{(\alpha_{1}-\alpha_{2})^{2}} \\
    \frac{-\alpha_{1}^{2}\alpha_{2}}{(\alpha_{1}-\alpha_{2})} &\frac{\alpha_{1}^{2}+2\alpha_{1}\alpha_{2}}{(\alpha_{1}-\alpha_{2})} &\frac{-2\alpha_{1}-\alpha_{2}}{(\alpha_{1}-\alpha_{2})} & \frac{1}{(\alpha_{1}-\alpha_{2})}\\
    \frac{-\alpha^{3}_{1}}{(\alpha_{2}-\alpha_{1})^{3}} &\frac{3\alpha^{2}_{1}}{(\alpha_{2}-\alpha_{1})^{3}} &\frac{-3\alpha_{1}}{(\alpha_{2}-\alpha_{1})^{3}} & \frac{1}{(\alpha_{2}-\alpha_{1})^{3}}\\
\end{pmatrix}
$$
\end{example}
\begin{rem}
A companion matrix is an essential tool in the area of linear algebra. Companion matrice is extensively employed in control theory, more precisely in the observable canonical form as well as the controllable canonical form. It plays a crucial role in linear recursive sequences and linear differential equations. The usual and confluent Vandermonde matrices are deeply connected with the companion matrix.
If we write the polynomial $P$ defined in \eqref{eq: 2.1} as
\begin{equation}
P(x)=x^{n}-\sum_{i=1}^{n}a_{i}x^{n-i}
\end{equation}
Then the companion matrix of the polynomial $P$ noted $C_{P}$ is defined to be
\begin{equation}\label{eq: 2.22}
C_{P}=\begin{pmatrix}
    0 &  1 & 0 & \cdots & 0 & 0 \\
    0 & 0 & 1 & \cdots & 0 & 0\\
    \vdots & \vdots & \vdots &  & \vdots & \vdots\\
      0 & 0 & 0 & \cdots & 0 & 1 \\
    a_{n} & a_{n-1}& a_{n-2} & \cdots & a_{2} & a_{1} \\
\end{pmatrix}
\end{equation}
It may be noted that the confluent Vandermonde matrix is the change of basis matrix which makes the companion matrix into Jordan canonical form J:
\begin{equation}
C_{P}=V_{G}(P)JV_{G}^{-1}(P)
\end{equation}
\end{rem}
Now, we are concerned with a more general case
\begin{defn}
Let $\alpha_{1},\alpha_{2},\ldots,\alpha_{s}$ be non-zero distinct pairwise elements of $\mathbb{F}$ and $r_{i}=(r_{i1},r_{i2}\ldots,r_{im_{i}}), i=1,\ldots, s$ where the $r_{ij}$ are nonnegative integers and let us denote $\mathbf{r_{s}}=(r_{1},r_{2},\ldots,r_{s})$.
The $\mathbf{r_{s}}$-confluent Vandermonde matrix related to $\alpha_{1},\alpha_{2},\ldots,\alpha_{s}$ is defined to be the following matrix
\begin{equation*}
V^{\mathbf{r_{s}}}_{G}(P) = \begin{pmatrix}
   V^{r_{1}}_{G}((x-\alpha_{1})^{m_{1}}) &  V^{r_{2}}_{G}((x-\alpha_{2})^{m_{2}}) & \cdots &  V^{r_{s}}_{G}((x-\alpha_{s})^{m_{s}})\\
\end{pmatrix}
\end{equation*}
For $k=1,2,\ldots,s$ the block matrix $(V^{r_{k}}_{G}((x-\alpha_{k})^{m_{k}})$ is of order $n\times m_{k}$ with entry
\begin{equation}
  (V^{r_{k}}_{G}((x-\alpha_{k})^{m_{k}}))_{ij}= \left\{ \begin{array}{ll}
         \binom{i-1}{j-1}\alpha_{k}^{r_{kj}+i-j} & \mbox{if $i \geq j$};\\
        0, & \mbox{otherwise }.\end{array} \right.
\end{equation}
\end{defn}
For any $\mathbf{r_{s}}$, the inverse of the $\mathbf{r_{s}}$-confluent Vandermonde matrix related to $\alpha_{1},\alpha_{2},\ldots,\alpha_{s}$ is obtained easily once the inverse of the confluent Vandermonde is available.
\begin{example}
We now apply our method to derive the inverse of the following $\mathbf{r_{2}}$-confluent Vandermonde matrix
$$ K = \begin{pmatrix}
    \alpha_{1}^{r_{11}} & 0 & 0 & \alpha_{2}^{r_{21}}   \\
   \alpha_{1}^{r_{11}+1} & \alpha_{1}^{r_{12}} & 0 & \alpha_{2}^{r_{21}+1} \\
    \alpha_{1}^{r_{11}+2} &2\alpha_{1}^{r_{12}+1} &\alpha_{1}^{r_{13}} & \alpha_{2}^{r_{21}+2} \\
    \alpha_{1}^{r_{11}+3} &3\alpha_{1}^{r_{12}+2} &3\alpha_{1}^{r_{13}+1} & \alpha_{2}^{r_{21}+3} \\
\end{pmatrix}
$$
where $\alpha_{1},\alpha_{2}$ are non-zero distinct elements of $\mathbb{F}$. Using the Example~\ref{example 1}, we obtain
$$ K^{-1} = \begin{pmatrix}

  \frac{-3\alpha_{1}^{2}\alpha_{2}+3\alpha_{1}\alpha_{2}^{2}-\alpha_{2}^{3}}{\alpha_{1}^{r_{11}}(\alpha_{1}-\alpha_{2})^{3}}&\frac{3\alpha_{1}^{2}}{\alpha_{1}^{r_{11}}(\alpha_{1}-\alpha_{2})^{3}}&\frac{-3\alpha_{1}}{\alpha_{1}^{r_{11}}(\alpha_{1}-\alpha_{2})^{3}} &\frac{1}{\alpha_{1}^{r_{11}}(\alpha_{1}-\alpha_{2})^{3}} \\
    \frac{2\alpha_{1}^{2}\alpha_{2}-\alpha_{1}\alpha_{2}^{2}}{\alpha_{1}^{r_{12}}(\alpha_{1}-\alpha_{2})^{2}} &\frac{\alpha_{2}^{2}-2\alpha_{1}^{2}-2\alpha_{1}\alpha_{2}}{\alpha_{1}^{r_{12}}(\alpha_{1}-\alpha_{2})^{2}} &\frac{3\alpha_{1}}{\alpha_{1}^{r_{12}}(\alpha_{1}-\alpha_{2})^{2}} & \frac{-1}{\alpha_{1}^{r_{12}}(\alpha_{1}-\alpha_{2})^{2}} \\
    \frac{-\alpha_{1}^{2}\alpha_{2}}{\alpha_{1}^{r_{13}}(\alpha_{1}-\alpha_{2})} &\frac{\alpha_{1}^{2}+2\alpha_{1}\alpha_{2}}{\alpha_{1}^{r_{13}}(\alpha_{1}-\alpha_{2})} &\frac{-2\alpha_{1}-\alpha_{2}}{\alpha_{1}^{r_{13}}(\alpha_{1}-\alpha_{2})} & \frac{1}{\alpha_{1}^{r_{13}}(\alpha_{1}-\alpha_{2})}\\
    \frac{-\alpha^{3}_{1}}{\alpha_{2}^{r_{21}}(\alpha_{2}-\alpha_{1})^{3}} &\frac{3\alpha^{2}_{1}}{\alpha_{2}^{r_{21}}(\alpha_{2}-\alpha_{1})^{3}} &\frac{-3\alpha_{1}}{\alpha_{2}^{r_{21}}(\alpha_{2}-\alpha_{1})^{3}} & \frac{1}{\alpha_{2}^{r_{21}}(\alpha_{2}-\alpha_{1})^{3}}\\
\end{pmatrix}
$$
\end{example}
\section{Explicit inverse of usual Vandermonde matrices and application}
In linear algebra, nonsingular linear Vandermonde systems appear naturally in several situations. Various methods have been presented to determine explicit solutions of the nonsingular linear Vandermonde systems see, e.g. \cite{Apant}. We are interested to present a new method for determining solutions to this type of linear system. Let us fix $k$ element $u_{0},u_{1},\ldots,u_{k-1}$ of $\mathbb{K}$, and consider the following nonsingular linear Vandermonde system,
\begin{equation}\label{eq: 2.24}
\sum_{i=1}^{k}\alpha_{i}^{n}x_{i}=u_{n}, n=0,1,\ldots,k-1
\end{equation}
where $\alpha_{i}, i=1,2,\ldots,k$ are pairwise distinct elements of $\mathbb{K}$. The most elegant method for solving the system \eqref{eq: 2.24} is based on the determination of the inverse of the usual Vandermonde matrix $V$ defined by
\begin{equation}\label{eq: 2.25}
V = \begin{pmatrix}
    1 &  1 & \cdots & 1   \\
   \alpha_{1} & \alpha_{2} & \cdots &  \alpha_{k} \\
    \alpha_{1}^{2} & \alpha_{2}^{2} & \cdots & \alpha_{k}^{2} \\
    \vdots & \vdots & \cdots & \vdots \\
    \alpha_{1}^{k-2} & \alpha_{2}^{k-2} & \cdots & \alpha_{k}^{k-2} \\
    \alpha_{1}^{k-1} &  \alpha_{2}^{k-1}   & \cdots & \alpha_{k}^{k-1}
\end{pmatrix}
\end{equation}
There are two famous methods of calculating the explicit inverse of the usual Vandermonde matrices. The first one is the $LU$ factorization it aims to provide an explicit formula of the inverse with a compact expression but it is necessary to compute the matrices $L$ and $U$, their inverses, and the matrix product $U^{-1}L^{-1}$ (see \cite{Hcli,Lrtur,Slyang}). The second inversion method is based on the use of the elementary symmetric functions of roots defined by the following expressions
\begin{align}
\left\{
\begin{array}{cccccccccc}
\sigma_{0}(\alpha_{1},\alpha_{2},\ldots,\alpha_{k})& = & 1  \\
\sigma_{1}(\alpha_{1},\alpha_{2},\ldots,\alpha_{k}) & = & \alpha_{1}+\alpha_{2}+\cdots+\alpha_{k} \\
\vdots & \vdots & \vdots & \\
\sigma_{r}(\alpha_{1},\alpha_{2},\ldots,\alpha_{k}) & = & \sum_{\substack{1\leq j_{1}< \cdots< j_{r}\leq k\phantom{-}}}\alpha_{j_{1}}\alpha_{j_{2}}\cdots\alpha_{j_{r}} \\
\vdots & \vdots & \vdots & \\
\sigma_{k}(\alpha_{1},\alpha_{2},\ldots,\alpha_{k}) & = & \alpha_{1}\alpha_{2}\cdots\alpha_{k} \\
\end{array}
\right.
\end{align}
Let us denote
\begin{equation}
\sigma_{r}(j)=\sigma_{r}(\alpha_{1},\ldots,\alpha_{j-1},\alpha_{j+1},\ldots,\alpha_{k})
\end{equation}
where $r=1,2,\ldots,k$. It has been demonstrated in \cite{Meael} that the explicit inverse of the usual Vandermonde matrices, using the elementary symmetric functions of roots, is
\begin{equation}
V^{-1}=(v_{ij})_{\substack{1\leq i,j \leq k\phantom{-}}}
\end{equation}
where the $v_{ij}$'s are given by
\begin{equation}
v_{ij}=\frac{(-1)^{k+j}\sigma_{k-j}(i)}{\prod_{p=1,p \neq i}^{k}(\alpha_{i}-\alpha_{p})}
\end{equation}
With our main result, we can easily  obtain the inverse of the matrix $V$ without utilizing the elementary symmetric function of root and the $LU$ factorization. When $m_{1}=m_{2}=\cdots=m_{k}=1$ it suffices to take
\begin{equation}
L_{j0}(x)[P]=g_{j}(\alpha_j)P_{j}(x),1\leq j \leq k.
\end{equation}
which coincides with the polynomials of Lagrange. That is, this polynomials are defined by
\begin{equation}
L_{j0}(x)[P]=\prod_{i=1,i\neq j}^{k}\frac{x-\alpha_{i}}{\alpha_{j}-\alpha_{i}}
\end{equation}
for $1\leq j \leq k$. More precisely, we have the following corollary
\begin{cor}\label{cor 3.1}
Let $\alpha_{1},\alpha_{2},\ldots,\alpha_{k}$ distinct elements of $\mathbb{F}$.
The inverse of the usual Vandermonde matrix
\begin{equation}
V = \begin{pmatrix}
    1 &  1 & \cdots & 1   \\
   \alpha_{1} & \alpha_{2} & \cdots &  \alpha_{k} \\
    \alpha_{1}^{2} & \alpha_{2}^{2} & \cdots & \alpha_{k}^{2} \\
    \vdots & \vdots & \cdots & \vdots \\
    \alpha_{1}^{k-2} & \alpha_{2}^{k-2} & \cdots & \alpha_{k}^{k-2} \\
    \alpha_{1}^{k-1} &  \alpha_{2}^{k-1}   & \cdots & \alpha_{k}^{k-1}
\end{pmatrix}
\end{equation}
is given by
\begin{equation}
V^{-1} = \begin{pmatrix}
    s_{1}(0) &  s^{(1)}_{1}(0) & \frac{1}{2!}s^{(2)}_{1}(0) &\cdots &   \frac{1}{(k-1)!}s^{(k-1)}_{1}(0) \\
    s_{2}(0) &  s^{(1)}_{2}(0) & \frac{1}{2!}s^{(2)}_{2}(0) &\cdots &   \frac{1}{(k-1)!}s^{(k-1)}_{2}(0) \\
    s_{3}(0) &  s^{(1)}_{3}(0) & \frac{1}{2!}s^{(2)}_{3}(0) &\cdots &   \frac{1}{(k-1)!}s^{(k-1)}_{3}(0) \\
    \vdots & \vdots & \vdots & \cdots & \vdots  \\
    s_{k}(0) &  s^{(1)}_{k}(0) & \frac{1}{2!}s^{(2)}_{k}(0) &\cdots &   \frac{1}{(k-1)!}s^{(k-1)}_{k}(0) \\
\end{pmatrix}
\end{equation}
where
\begin{equation*}
s_{j}(x)=\prod_{i=1,i\neq j}^{k}\frac{x-\alpha_{i}}{\alpha_{j}-\alpha_{i}}
\end{equation*}
\end{cor}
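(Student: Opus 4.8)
The plan is to derive Corollary~\ref{cor 3.1} as the specialization of Theorem~\ref{Theorm 2.4} to the case $s=k$ and $m_{1}=m_{2}=\cdots=m_{k}=1$, so that $P(x)=(x-\alpha_{1})(x-\alpha_{2})\cdots(x-\alpha_{k})$ has degree $n=k$. The first step is to observe that in this case the confluent Vandermonde matrix $V_{G}(P)$ coincides with the usual Vandermonde matrix $V$ of \eqref{eq: 2.25}: each block $V_{j}=V_{G}((x-\alpha_{j})^{1})$ is the single column whose $(i,1)$ entry is $\binom{i-1}{0}\alpha_{j}^{i-1}=\alpha_{j}^{i-1}$, and placing these columns side by side reproduces $V$ exactly. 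Hence $V^{-1}=V_{G}^{-1}(P)$ and Theorem~\ref{Theorm 2.4} applies verbatim.

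The second step is to simplify the Hermite-type polynomials $L_{jk_{j}}(x)[P]$. When $m_{j}=1$ the only admissible index is $k_{j}=0$, and the sum in \eqref{eq: 2.2} reduces to its single term $i=0$, giving $L_{j0}(x)[P]=P_{j}(x)g_{j}(\alpha_{j})=P_{j}(x)/P_{j}(\alpha_{j})=\prod_{i\neq j}\frac{x-\alpha_{i}}{\alpha_{j}-\alpha_{i}}=s_{j}(x)$, the classical Lagrange basis polynomial. Substituting this identification into the formula of Theorem~\ref{Theorm 2.4}, each block $\mathcal{L}_{rm_{r}}=\mathcal{L}_{r1}$ becomes the single row $\left(\frac{1}{(j-1)!}L^{(j-1)}_{r0}(0)[P]\right)_{1\le j\le k}=\left(\frac{1}{(j-1)!}s_{r}^{(j-1)}(0)\right)_{1\le j\le k}$; stacking the rows for $r=1,\dots,k$ yields precisely the matrix asserted in the corollary.

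There is essentially no obstacle: the statement is a direct corollary, and the only points that merit an explicit line are the reduction $L_{j0}(x)[P]=s_{j}(x)$ and the observation that $\frac{1}{(j-1)!}s_{r}^{(j-1)}(0)$ is nothing but the coefficient of $x^{j-1}$ in the Taylor (equivalently, canonical-basis) expansion of $s_{r}$, in agreement with the coordinate formula \eqref{eq: 2.5}. If desired, I would close with the independent sanity check that $s_{r}(\alpha_{p})=\delta_{rp}$ forces $V^{-1}V=I$, which re-confirms the formula without appealing to Theorem~\ref{Theorm 2.4}.
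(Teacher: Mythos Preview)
Your proposal is correct and follows exactly the paper's approach: the paper's proof is the single line ``It is a particular case of the main result,'' and you have spelled out precisely that specialization (namely $s=k$, $m_{1}=\cdots=m_{k}=1$, so that $V_{G}(P)=V$ and $L_{j0}(x)[P]=s_{j}(x)$) and substituted into Theorem~\ref{Theorm 2.4}. Your added detail and the closing sanity check $s_{r}(\alpha_{p})=\delta_{rp}$ are fine elaborations but not required.
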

\begin{proof}
It is a particular case of the main result.
\end{proof}
\begin{rem}In the case of Corollary~\ref{cor 3.1}, the companion matrix $C_{P}$ defined by \eqref{eq: 2.22} is diagonalizable. It is a well-known fact that the characetristic polynomial of $C_{P}$ is $P(x)=\prod_{j=1}^{k}(x-\alpha_{j})$.
Let $v_{i}$ the vector colomn defined by
\begin{equation*}
v_{i}=(1,\alpha_{i},\alpha_{i}^{2},\ldots,\alpha_{i}^{k-1})^{T}, 1\leq i\leq k
\end{equation*}
for $i=1,2,\ldots,k$. It is easy to check
\begin{equation*}
C_{P}v_{i}=\alpha_{i}v_{i}, 1\leq i\leq k
\end{equation*}
which means that $v_{i}, 1\leq i\leq k$, are eigenvectors of $C_{P}$. Consequently, we have
\begin{equation*}
C_{P}=VDV^{-1}
\end{equation*}
where $D$ is the diagonal matrix $D=diag(\alpha_{1},\alpha_{2},\ldots,\alpha_{k})$ and $V$ is the usual Vandermonde matrix defined by \eqref{eq: 2.25}.
\end{rem}
\begin{prop}
The solutions $x_{i},i=1,2,\ldots,k$ of the nonsingular linear Vandermonde system \eqref{eq: 2.24} are given by
\begin{equation}\label{eq: 2.34}
x_{i}=\sum_{j=0}^{k-1}\frac{u_{j}}{j!}s^{(j)}_{i}(0)
\end{equation}
where
\begin{equation*}
s_{j}(x)=\prod_{i=1,i\neq j}^{k}\frac{x-\alpha_{i}}{\alpha_{j}-\alpha_{i}}
\end{equation*}
The solution $x_{i}$ can be represented in matrix expression as follows
\begin{equation}
x_{i}=\begin{pmatrix}
s_{i}(0) & \frac{1}{1!}s^{(1)}_{i}(0) & \cdots & \frac{1}{(k-1)!}s^{(k-1)}_{i}(0)
\end{pmatrix}
\begin{pmatrix}
u_{0} \\
u_{1} \\
\vdots \\
u_{k-1}
\end{pmatrix}
\end{equation}
\end{prop}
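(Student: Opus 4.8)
The plan is to recognize that the system \eqref{eq: 2.24} is nothing but the matrix equation $Vx=u$, where $V$ is the usual Vandermonde matrix \eqref{eq: 2.25}, $x=(x_{1},\ldots,x_{k})^{T}$ and $u=(u_{0},u_{1},\ldots,u_{k-1})^{T}$; indeed the equation indexed by $n$ (for $0\leq n\leq k-1$) is exactly the $(n+1)$-th row of $Vx=u$. Since the $\alpha_{i}$ are pairwise distinct, $V$ is invertible, so $x=V^{-1}u$, and Corollary~\ref{cor 3.1} already supplies $V^{-1}$ in closed form, its $(i,j)$-entry being $\frac{1}{(j-1)!}s^{(j-1)}_{i}(0)$. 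Reading off the $i$-th row of the product $V^{-1}u$ gives
\begin{equation*}
x_{i}=\sum_{j=1}^{k}\frac{1}{(j-1)!}s^{(j-1)}_{i}(0)\,u_{j-1}=\sum_{j=0}^{k-1}\frac{u_{j}}{j!}\,s^{(j)}_{i}(0),
\end{equation*}
which is precisely \eqref{eq: 2.34}; the matrix form is then obtained simply by displaying this as a row vector times the column $(u_{0},\ldots,u_{k-1})^{T}$.

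If one prefers a self-contained argument not invoking Corollary~\ref{cor 3.1}, I would instead verify directly that the $x_{i}$ defined by \eqref{eq: 2.34} solve \eqref{eq: 2.24}. The key identity is the Lagrange interpolation formula: since $s_{i}(\alpha_{l})=\delta_{il}$ and $\deg s_{i}\leq k-1$, for every $n$ with $0\leq n\leq k-1$ the two polynomials $x^{n}$ and $\sum_{i=1}^{k}\alpha_{i}^{n}s_{i}(x)$ have degree $\leq k-1$ and agree at the $k$ distinct points $\alpha_{1},\ldots,\alpha_{k}$, hence $x^{n}=\sum_{i=1}^{k}\alpha_{i}^{n}s_{i}(x)$. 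Differentiating $j$ times and evaluating at $0$ yields $\sum_{i=1}^{k}\alpha_{i}^{n}s^{(j)}_{i}(0)=(x^{n})^{(j)}\big|_{x=0}=j!\,\delta_{nj}$ for all indices $0\leq n,j\leq k-1$.

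Plugging \eqref{eq: 2.34} into the left-hand side of \eqref{eq: 2.24} and interchanging the two finite sums then gives
\begin{equation*}
\sum_{i=1}^{k}\alpha_{i}^{n}x_{i}=\sum_{j=0}^{k-1}\frac{u_{j}}{j!}\sum_{i=1}^{k}\alpha_{i}^{n}s^{(j)}_{i}(0)=\sum_{j=0}^{k-1}\frac{u_{j}}{j!}\,j!\,\delta_{nj}=u_{n},
\end{equation*}
for all $0\leq n\leq k-1$, and since $V$ is invertible this is the unique solution. I expect no genuine obstacle: the only points requiring care are the bookkeeping between the $0$-indexed data $u_{0},\ldots,u_{k-1}$ and the $1$-indexed rows and columns of $V^{-1}$, and the elementary observation that $(x^{n})^{(j)}(0)=j!\,\delta_{nj}$ when $0\leq n,j\leq k-1$. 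The stated matrix expression for $x_{i}$ is then just a restatement of \eqref{eq: 2.34}.
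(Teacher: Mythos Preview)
Your proposal is correct and your primary argument---writing the system as $Vx=u$, invoking the explicit inverse from Corollary~\ref{cor 3.1}, and reading off the $i$-th row---is precisely the paper's intended route (the proposition is stated there without a separate proof, as an immediate consequence of that corollary). Your alternative self-contained verification via the Lagrange identity $x^{n}=\sum_{i}\alpha_{i}^{n}s_{i}(x)$ and the observation $(x^{n})^{(j)}(0)=j!\,\delta_{nj}$ is a pleasant bonus not present in the paper; it has the advantage of bypassing the inverse-matrix formula entirely and confirming the solution directly.
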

\begin{rem}~
\begin{itemize}
\item The Vandermonde systems represent a fundamental tool in the control and system theory see e.g.\cite{Apant}. The usual Vandermonde matrix has received much interest in cryptography, it is a helpful tool in decoding the Reed-Solomon codes see, e.g. \cite{Sireed}.
\item In the large literature, solutions of the linear system of Vandermonde \eqref{eq: 2.24} are not known under the form \eqref{eq: 2.34}.
\end{itemize}
\end{rem}
\begin{example}
For giving illustration, let $\alpha_{1},\alpha_{2},\alpha_{3},\alpha_{4}$ distinct elements of $\mathbb{F}$. We wish to inverse the matrix
$$ K = \begin{pmatrix}
    1 &  1 & 1 & 1   \\
   \alpha_{1} & \alpha_{2} & \alpha_{3} & \alpha_{4} \\
    \alpha^{2}_{1} &\alpha_{2}^{2} &\alpha^{2}_{3} & \alpha_{4}^{2} \\
     \alpha^{3}_{1} &\alpha^{3}_{2} &\alpha_{3}^{3} & \alpha_{4}^{3} \\
\end{pmatrix}
$$
we consider
$$ \left \{
\begin{array}{rcl}
s_{1}(x)[P]&=&\frac{(x-\alpha_{2})(x-\alpha_{3})(x-\alpha_{4})}{(\alpha_{1}-\alpha_{2})(\alpha_{1}-\alpha_{3})(\alpha_{1}-\alpha_{4})}\\
s_{2}(x)[P]&=&\frac{(x-\alpha_{1})(x-\alpha_{3})(x-\alpha_{4})}{(\alpha_{2}-\alpha_{1})(\alpha_{2}-\alpha_{3})(\alpha_{2}-\alpha_{4})}\\
s_{3}(x)[P]&=&\frac{(x-\alpha_{1})(x-\alpha_{2})(x-\alpha_{4})}{(\alpha_{3}-\alpha_{1})(\alpha_{3}-\alpha_{2})(\alpha_{3}-\alpha_{4})}\\
s_{4}(x)[P]&=&\frac{(x-\alpha_{1})(x-\alpha_{2})(x-\alpha_{3})}{(\alpha_{4}-\alpha_{1})(\alpha_{4}-\alpha_{2})(\alpha_{4}-\alpha_{3})}
\end{array}
\right.
$$
Expressing these polynomials in the canonical basis of $\mathbb{F}_{3}[x]$, we obtain
$$ \left \{
\begin{array}{rcl}
s_{1}(x)[P]&=&\frac{-(\alpha_{2}\alpha_{3}\alpha_{4})+(\alpha_{2}\alpha_{3}+\alpha_{2}\alpha_{4}+\alpha_{3}\alpha_{4})x-(\alpha_{2}+\alpha_{3}+\alpha_{4})x^{2}+x^{3}}{(\alpha_{1}-\alpha_{2})(\alpha_{1}-\alpha_{3})(\alpha_{1}-\alpha_{4})}\\
s_{2}(x)[P]&=&\frac{-(\alpha_{1}\alpha_{3}\alpha_{4})+(\alpha_{1}\alpha_{3}+\alpha_{1}\alpha_{4}+\alpha_{3}\alpha_{4})x-(\alpha_{1}+\alpha_{3}+\alpha_{4})x^{2}+x^{3}}{(\alpha_{2}-\alpha_{1})(\alpha_{2}-\alpha_{3})(\alpha_{2}-\alpha_{4})}\\
s_{3}(x)[P]&=&\frac{-(\alpha_{1}\alpha_{2}\alpha_{4})+(\alpha_{1}\alpha_{2}+\alpha_{1}\alpha_{4}+\alpha_{2}\alpha_{4})x-(\alpha_{1}+\alpha_{2}+\alpha_{4})x^{2}+x^{3}}{(\alpha_{3}-\alpha_{1})(\alpha_{3}-\alpha_{2})(\alpha_{3}-\alpha_{4})}\\
s_{4}(x)[P]&=&\frac{-(\alpha_{1}\alpha_{2}\alpha_{3})+(\alpha_{1}\alpha_{2}+\alpha_{1}\alpha_{3}+\alpha_{2}\alpha_{3})x-(\alpha_{1}+\alpha_{2}+\alpha_{4})x^{2}+x^{3}}{(\alpha_{4}-\alpha_{1})(\alpha_{4}-\alpha_{2})(\alpha_{4}-\alpha_{3})}
\end{array}
\right.
$$
Using Corollary~\ref{cor 3.1}, the inverse of the matrix $K$ is
$$\begin{pmatrix}
\frac{-(\alpha_{2}\alpha_{3}\alpha_{4})}{(\alpha_{1}-\alpha_{2})(\alpha_{1}-\alpha_{3})(\alpha_{1}-\alpha_{4})} &\frac{(\alpha_{2}\alpha_{3}+\alpha_{2}\alpha_{4}+\alpha_{3}\alpha_{4})}{(\alpha_{1}-\alpha_{2})(\alpha_{1}-\alpha_{3})(\alpha_{1}-\alpha_{4})} &\frac{-(\alpha_{2}+\alpha_{3}+\alpha_{4})}{(\alpha_{1}-\alpha_{2})(\alpha_{1}-\alpha_{3})(\alpha_{1}-\alpha_{4})} & \frac{1}{(\alpha_{1}-\alpha_{2})(\alpha_{1}-\alpha_{3})(\alpha_{1}-\alpha_{4})} \\
    \frac{-(\alpha_{1}\alpha_{3}\alpha_{4})}{(\alpha_{2}-\alpha_{1})(\alpha_{2}-\alpha_{3})(\alpha_{2}-\alpha_{4})} &\frac{(\alpha_{1}\alpha_{3}+\alpha_{1}\alpha_{4}+\alpha_{3}\alpha_{4})}{(\alpha_{2}-\alpha_{1})(\alpha_{2}-\alpha_{3})(\alpha_{2}-\alpha_{4})} &\frac{-(\alpha_{1}+\alpha_{3}+\alpha_{4})}{(\alpha_{2}-\alpha_{1})(\alpha_{2}-\alpha_{3})(\alpha_{2}-\alpha_{4})} & \frac{1}{(\alpha_{2}-\alpha_{1})(\alpha_{2}-\alpha_{3})(\alpha_{2}-\alpha_{4})} \\
 \frac{-(\alpha_{1}\alpha_{2}\alpha_{4})}{(\alpha_{3}-\alpha_{1})(\alpha_{3}-\alpha_{2})(\alpha_{3}-\alpha_{4})} &\frac{(\alpha_{1}\alpha_{2}+\alpha_{1}\alpha_{4}+\alpha_{2}\alpha_{4})}{(\alpha_{3}-\alpha_{1})(\alpha_{3}-\alpha_{2})(\alpha_{3}-\alpha_{4})} &\frac{-(\alpha_{1}+\alpha_{2}+\alpha_{4})}{(\alpha_{3}-\alpha_{1})(\alpha_{3}-\alpha_{2})(\alpha_{3}-\alpha_{4})} & \frac{1}{(\alpha_{3}-\alpha_{1})(\alpha_{3}-\alpha_{2})(\alpha_{3}-\alpha_{4})} \\
 \frac{-(\alpha_{1}\alpha_{2}\alpha_{3})}{(\alpha_{4}-\alpha_{1})(\alpha_{4}-\alpha_{2})(\alpha_{4}-\alpha_{3})} &\frac{(\alpha_{1}\alpha_{2}+\alpha_{1}\alpha_{3}+\alpha_{2}\alpha_{3})}{(\alpha_{4}-\alpha_{1})(\alpha_{4}-\alpha_{2})(\alpha_{4}-\alpha_{3})} &\frac{-(\alpha_{1}+\alpha_{2}+\alpha_{3})}{(\alpha_{4}-\alpha_{1})(\alpha_{4}-\alpha_{2})(\alpha_{4}-\alpha_{3})} & \frac{1}{(\alpha_{4}-\alpha_{1})(\alpha_{4}-\alpha_{2})(\alpha_{4}-\alpha_{3})}
\end{pmatrix}$$
\end{example}
\section{Conclusion}
 The important role of the generalization of Hermite's interpolation for the confluent Vandermondes matrices has shown, in providing a new method for inverting this kind of matrices. It is well known that obtaining the inverse of the confluent Vandermonde matrices is fairly difficult but with our method, we have made the problem to a calculation of Taylor's expansion.\\
Furthermore, the comparison with the existing methods and algorithms leads us to say that our method is direct and is manageable for inverting the confluent Vandermonde matrices. The advantage of our method is that is direct and straightforward, it gives explicit and compact formulas not exist in the large literature on this essential topic.\\
Certainly, Our results may have many intersting perspectives in diverse areas of mathematics and natural sciences, notably on the situations where the Vandermonde matrices have acquired much usefulness.\\

\end{document}